\documentclass[11pt]{amsart}
\usepackage{graphicx}
\usepackage{mathabx} 
\usepackage{oldgerm}
\usepackage{mathdots}  
\usepackage{stmaryrd}
\usepackage{bm}
\usepackage[all]{xy}
\usepackage{color}
\usepackage{enumerate}

\usepackage{hyperref}
\usepackage{mathrsfs}
\usepackage{tikz} 
\usetikzlibrary{arrows,%
  decorations.markings,%
  matrix,%
  shapes}
  
\newcommand{\diagram}[3]{\matrix (#1) [matrix of math nodes,row
  sep={#2},column sep={#3},text height=1.5ex,text
  depth=0.25ex]}

\usepackage[latin1]{inputenc}
\usepackage{amsmath,amsthm, amscd, amssymb, amsfonts}

\numberwithin{equation}{section}

\newcommand{\End}{\mbox{\rm End\,}}
\theoremstyle{plain}

\numberwithin{equation}{section}

\newtheorem{theorem}[equation]{Theorem}
\newtheorem{corollary}[equation]{Corollary}

\newtheorem{lemma}[equation]{Lemma}

\theoremstyle{definition}
\newtheorem*{definition}{Definition}

\theoremstyle{definition}
\newtheorem{remark}[equation]{Remark}

\newcommand{\A}{\mathscr{A}}
\newcommand{\B}{\mathscr{B}}
\newcommand{\C}{\mathscr{C}}

\newcommand{\D}{\mathscr{D}}

\newcommand\Ext{\operatorname{Ext}}

\newcommand\cx{\operatorname{cx}}

\newcommand\e{\operatorname{e}}
\newcommand\op{\operatorname{op}}
\newcommand\ot{\otimes}
\renewcommand\mod{\operatorname{mod}}

\newcommand\Hom{\operatorname{Hom}}

\newcommand\Hoch{\operatorname{HH}}
\newcommand\HH{\Hoch}

\newcommand\unit{\mathbf{1}}
\DeclareMathOperator{\Ima}{Im}

\newcommand{\Coh}{\operatorname{H}\nolimits}

\makeatletter
\def\blx@maxline{77}
\makeatother

\begin{document}
\title[Cohomology of finite tensor categories]{On the cohomology of finite tensor categories}

\author[P.A.\ Bergh]{Petter Andreas Bergh}

\address{Petter Andreas Bergh \\ Institutt for matematiske fag \\ NTNU \\ N-7491 Trondheim \\ Norway} 
\email{petter.bergh@ntnu.no}

\subjclass[2020]{16E40, 16T05, 18G15, 18M05}
\keywords{Finite tensor categories; cohomology}

\begin{abstract}
It has been conjectured that finite tensor categories have finitely generated cohomology. We show that this is equivalent to finitely generated Hochschild cohomology for the endomorphism algebras of the projective generators.
\end{abstract}

\maketitle

\section{Introduction}

Given a finite group $G$ and a field $k$, the cohomology ring $\Coh^*(G,k)$ is a graded-commutative $k$-algebra. Moreover, for every left $kG$-module $M$, the tensor product induces a graded ring homomorphism from $\Coh^*(G,k)$ to $\Ext_{kG}^*(M,M)$, turning the latter into a module over the cohomology ring. By a classical result of Evens, Golod, and Venkov, the cohomology ring is a finitely generated $k$-algebra, and $\Ext_{kG}^*(M,M)$ is a finitely generated $\Coh^*(G,k)$-module for every finitely generated left $kG$-module $M$; cf.\ \cite{E,G,V}. One consequence of this is a powerful theory of support varieties for modules over such group algebras, where the varieties encode homological information. These support varieties are again a key ingredient in Benson, Carlson, and Rickard's classification in \cite{BCR} of thick tensor ideals in the stable module category.

It is an open question whether finitely generated cohomology holds for all finite-dimensional Hopf algebras, or even more generally, for all finite tensor categories. In \cite{EO}, Etingof and Ostrik conjectured that this is indeed true: for every finite tensor category, the cohomology ring of the unit object is finitely generated, and every object has finitely generated cohomology with respect to it. As for group algebras, finiteness ensures a fruitful theory of support varieties, recently explored in \cite{BEPW, BPW1, BPW2, BPW3}. 

The concept of finitely generated cohomology also applies to finite-dimensional algebras. For these, the Hochschild cohomology ring, which is always graded-commutative, plays the role of the cohomology ring. Again, given a left module, the tensor product induces a graded ring homomorphism into the cohomology of the module, turning this into a module over the Hochschild cohomology ring. However, not all finite-dimensional algebras have finitely generated cohomology. In fact, as shown in \cite{X}, there exist algebras whose Hochschild cohomology rings are not finitely generated. Finding classes of algebras for which finiteness holds is an ongoing program.

In this paper, we link the conjecture of Etingof and Ostrik to the Hochschild cohomology version for algebras. More precisely, we show that a finite tensor category has finitely generated cohomology if and only if finiteness with respect to Hochschild cohomology holds for the endomorphism ring of some (equivalently, every) projective generator in the category.

\subsection*{Acknowledgments} 
I would like to thank Dave Benson, Karin Erdmann, and Pavel Etingof for valuable comments and suggestions.

\section{Finite tensor categories and cohomology}\label{sec:prelim}

We start by recalling the basics on finite tensor categories and their cohomology. The standard reference for what follows is \cite{EGNO}. Let us fix a field $k$, not necessarily algebraically closed. 

By a finite tensor category over $k$ we mean a triple $\left ( \C, \ot, \unit \right )$ with the following properties. First, the category $\C$ is abelian, $k$-linear, and locally finite. Up to isomorphism, there are only finitely many simple objects in $\C$, and each of these has a projective cover. Next, the triple $\left ( \C, \ot, \unit \right )$ is a monoidal category, the unit object $\unit$ is simple, and the tensor product $\ot$ is bilinear on morphisms. Finally, the monoidal category is rigid: every object $X \in \C$ has a left dual $X^*$ and a right dual $^*X$. From now on, we fix such a finite tensor $k$-category $\left ( \C, \ot, \unit \right )$. 

Note that every object actually has a projective cover, not just the simple ones; see \cite[Remark 2.1(1)]{BEPW}. Therefore, with enough projective objects in $\C$, we may for all objects $X,Y \in \C$ define $\Ext_{\C}^n(X,Y)$ in two equivalent ways: as equivalence classes of $n$-fold extensions, or by using a projective resolution of $X$. We denote the graded $k$-vector space $\oplus_{n=0}^{\infty} \Ext_{\C}^n(X,Y)$ by $\Ext_{\C}^*(X,Y)$. The \emph{cohomology ring} of $\C$ is $\Ext_{\C}^*( \unit, \unit )$, which we denote by $\Coh^* ( \C )$. By \cite[Theorem 1.7]{SA}, this is a graded-commutative ring, that is, $xy = (-1)^{|x| \cdot |y|}yx$ for homogeneous elements $x,y$. With the tensor product being exact, the functor  $- \ot X$ induces a homomorphism
\begin{center}
\begin{tikzpicture}
\diagram{d}{3em}{3em}{
\Coh^* ( \C ) & \Ext_{\C}^*(X,X) \\
 };
\path[->, font = \scriptsize, auto]
(d-1-1) edge node{$\varphi_X$} (d-1-2);
\end{tikzpicture}
\end{center}
of graded $k$-algebras. Using $\varphi_X$ and $\varphi_Y$, then, and Yoneda composition, the $k$-vector space $\Ext_{\C}^*(X,Y)$ becomes both a right and a left $\Coh^* ( \C )$-module. However, by \cite[Corollary 2.3]{BPW2}, the two module structures coincide up to a sign for homogeneous elements. In particular, the image of the ring homomorphism $\varphi_X$ is contained in the graded center of $\Ext_{\C}^*(X,X)$.

In \cite{EO}, Etingof and Ostrik conjectured that $\Coh^* ( \C )$ is always finitely generated as a $k$-algebra, and that $\Ext_{\C}^*(X,X)$ is a finitely generated $\Coh^* ( \C )$-module for every object $X \in \C$. Our aim is to link this property of finite tensor categories to a similar property for finite-dimensional algebras, in terms of Hochschild cohomology. Recall therefore that if $A$ is a finite-dimensional $k$-algebra, then its Hochschild cohomology ring, denoted $\HH^* (A)$, is the graded $k$-algebra $\Ext_{A^{\e}}^*( A,A)$, where $A^e = A \ot_k A^{\op}$. As for $\Coh^* ( \C )$, this is a graded-commutative ring, a result proved already by Yoneda; see \cite[Proposition 3]{Y}. Given a finitely generated left $A$-module $M$, the tensor product $- \ot_A M$ induces a homomorphism
\begin{center}
\begin{tikzpicture}
\diagram{d}{3em}{3em}{
\HH^* (A) & \Ext_{A}^*(M,M) \\
 };
\path[->, font = \scriptsize, auto]
(d-1-1) edge node{$\psi_M$} (d-1-2);
\end{tikzpicture}
\end{center}
of graded $k$-algebras. Consequently, given two finitely generated left $A$-modules $M,N$, the Hochschild cohomology ring acts on $\Ext_A^*(M,N)$ in much the same way as $\Coh^* ( \C )$ acts on the cohomology of $\C$. Namely, $\Ext_A^*(M,N)$ becomes a right and a left $\HH^* (A)$-module via $\psi_M$ and $\psi_N$, respectively, followed by Yoneda composition. However, also here the two module structures coincide up to a sign for homogeneous elements; see \cite[Corollary 1.3]{SS}. Again, this means in particular that the image of the ring homomorphism $\psi_M$ is contained in the graded center of $\Ext_{A}^*(M,M)$.

\begin{definition}
Let $k$ be a field.

(1) A finite tensor category $\left ( \C, \ot, \unit \right )$ over $k$ satisfies the \emph{finiteness condition} \textbf{Fgt} if $\Coh^*( \C )$ is finitely generated, and $\Ext_{\C}^*(X,X)$ is a finitely generated $\Coh^*( \C )$-module for every object $X \in \C$.

(2) A finite-dimensional $k$-algebra $A$ satisfies the \emph{finiteness condition} \textbf{Fg} if $\HH^* (A)$ is finitely generated, and $\Ext_A^*(M,M)$ is a finitely generated $\HH^* (A)$-module for every finitely generated left $A$-module $M$.
\end{definition}

In our main result, we shall prove that \textbf{Fgt} holds for $\left ( \C, \ot, \unit \right )$ if and only if \textbf{Fg} holds for certain algebras associated to $\C$. However, before we start proving this, a couple of remarks are in order.

\begin{remark}\label{rem:fg}
(1) In the recent papers \cite{Be2, BEPW, BPW1, BPW2, BPW3}, the finiteness condition \textbf{Fgt} was denoted by \textbf{Fg}. However, when we study both finiteness conditions defined above, then we need to distinguish the two, and so we use \textbf{Fg} for the version involving algebras and Hochschild cohomology. This is historically correct, since this version was first introduced in \cite{EHSST}. We use the letter t in \textbf{Fgt} to indicate that we are considering finite tensor categories.

(2) The conjecture of Etingof and Ostrik states that \textbf{Fgt} holds for every finite tensor category, and as mentioned above, we shall prove that this is equivalent to the statement that \textbf{Fg} holds for certain finite-dimensional algebras. However, it is \emph{not} the case that \textbf{Fg} holds for all finite-dimensional algebras. In fact, this condition is quite strong, as it implies, for example, that the algebra is Gorenstein; see \cite[Theorem 2.5]{EHSST}. It is not even true that the Hochschild cohomology ring is finitely generated in general; a counterexample to this was first provided in \cite{X}.
\end{remark}

In the proof of our main result, we shall make use of the \emph{dual} category $\C^{\vee}$ of $\C$. This is the category with the same objects as $\C$, but with the directions of all the morphisms reversed. In category theory, this is usually called the opposite category, and denoted by $\C^{\rm{op}}$. However, for a monoidal category, like our $\C$, the opposite category $\C^{\rm{op}}$ usually refers to the monoidal category one obtains by switching the objects in the tensor product. It is a finite tensor category whenever $\C$ is.

\begin{remark}\label{rem:dual}
If $\left ( \C, \ot, \unit \right )$ is a finite tensor category over $k$, then so is $\left ( \C^{\vee}, \ot, \unit \right )$. Indeed, taking left duals, say, induces an equivalence $\C^{\vee} \longrightarrow \C^{\rm{op}}$. Moreover, if \textbf{Fgt} holds for $\left ( \C, \ot, \unit \right )$, then it also holds for $\left ( \C^{\vee}, \ot, \unit \right )$. Namely, to every homogeneous element of $\Coh^*( \C )$---that is, an extension---there corresponds a unique homogeneous element of $\Coh^*( \C^{\vee} )$, of the same degree, obtained by reversing the arrows. This induces an anti-isomorphism 
\begin{center}
\begin{tikzpicture}
\diagram{d}{3em}{3em}{
\Coh^* ( \C ) &\Coh^*( \C^{\vee} ) \\
 };
\path[->, font = \scriptsize, auto]
(d-1-1) edge (d-1-2);
\end{tikzpicture}
\end{center}
and so $\Coh^*( \C^{\vee} )$ is finitely generated, since $\Coh^*( \C )$ is. A similar argument shows that $\Ext_{\C^{\vee}}^*(X,X)$ is a finitely generated $\Coh^*( \C^{\vee} )$-module for every object $X \in \C^{\vee}$, when finite generation holds in $\C$.
\end{remark}

We shall need the following result, a generalized Eckmann-Shapiro lemma for finite tensor categories. The result is \cite[Lemma 2.4]{NP}, but we provide an alternative proof. A version for Hopf algebras was given in \cite[Theorem 9.3.9]{W}. Recall first that for objects $X,Y \in \C$, the graded $k$-vector space $\Ext_{\C}^*(X,Y)$ is a right $\Coh^*( \C )$-module via the tensor product induced ring homomorphism 
\begin{center}
\begin{tikzpicture}
\diagram{d}{3em}{3em}{
\Coh^* ( \C ) & \Ext_{\C}^*(X,X) \\
 };
\path[->, font = \scriptsize, auto]
(d-1-1) edge node{$\varphi_X$} (d-1-2);
\end{tikzpicture}
\end{center}
followed by Yoneda composition. Specifically, for homogeneous elements $\eta \in \Coh^*( \C )$ and $\theta \in \Ext_{\C}^*(X,Y)$, the right $\Coh^*( \C )$-module scalar product is given by the Yoneda composition $\theta \circ  ( \eta \ot X )$. Therefore, when $X = \unit$, then the right $\Coh^*( \C )$-module scalar product on $\Ext_{\C}^*( \unit, Y )$ is the Yoneda product directly.

\begin{lemma}[Generalized Eckmann-Shapiro lemma]\label{lem:ES}
If $\left ( \C, \ot, \unit \right )$ is a finite tensor category over $k$, then for all objects $X,Y \in \C$, the right $\Coh^*( \C )$-modules $\Ext_{\C}^*(X,Y)$ and $\Ext_{\C}^*( \unit, Y \ot X^* )$ are isomorphic.
\end{lemma}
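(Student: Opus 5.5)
The plan is to use the adjunction coming from rigidity. For every object $X$ there is an adjunction isomorphism
\[
\Hom_{\C}(Z \ot X, Y) \cong \Hom_{\C}(Z, Y \ot X^*),
\]
natural in $Z$ and $Y$. It is built from the evaluation $X^* \ot X \to \unit$ and the coevaluation $\unit \to X \ot X^*$, or from the right dual, depending on the convention for $X^*$. Since $\ot$ is exact in each variable, both functors $- \ot X$ and $- \ot X^*$ are exact. The first step is to upgrade this adjunction to the level of $\Ext$. Take a projective resolution $P_\bullet \to \unit$. Then $P_\bullet \ot X \to X$ is exact, and each $P_n \ot X$ is projective, since $\Hom(P_n \ot X, -) \cong \Hom(P_n, - \ot X^*)$ is exact. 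So $P_\bullet \ot X$ is a projective resolution of $X$. Applying the adjunction degreewise gives an isomorphism of complexes
\[
\Hom_{\C}(P_\bullet \ot X, Y) \cong \Hom_{\C}(P_\bullet, Y \ot X^*),
\]
and taking cohomology yields a $k$-linear graded isomorphism $\Phi \colon \Ext_{\C}^*(X,Y) \to \Ext_{\C}^*(\unit, Y \ot X^*)$.

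An equivalent description is better suited to the module structures. The map $\Phi$ sends an $n$-extension $\theta$ from $X$ to $Y$ to $(\theta \ot X^*)$ composed with the coevaluation $\unit \to X \ot X^*$, viewed as a degree-zero map. Here $\theta \ot X^*$ is an extension from $X \ot X^*$ to $Y \ot X^*$, which is still exact because $\ot$ is exact. The inverse sends $\omega$ to the composite of $\omega \ot X$ with $Y \ot \mathrm{ev}_X$. The triangle (zigzag) identities show that these are mutually inverse. At the chain level this is just the adjunction isomorphism above.

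The main step is $\Coh^*(\C)$-linearity. Fix $\eta \in \Coh^*(\C)$. On the right-hand side, the action is Yoneda product, so I must show
\[
\Phi(\theta \circ (\eta \ot X)) = \Phi(\theta) \circ \eta.
\]
Using the second description, the left side is $((\theta \circ (\eta \ot X)) \ot X^*) \circ \mathrm{coev}$. By functoriality of $- \ot X^*$ on Yoneda products, this equals $(\theta \ot X^*) \circ (\eta \ot X \ot X^*) \circ \mathrm{coev}$. It then suffices to prove the interchange
\[
(\eta \ot X \ot X^*) \circ \mathrm{coev} = \mathrm{coev} \circ \eta
\]
as elements of $\Ext^n(\unit, X \ot X^*)$, where on the right $\eta$ is viewed via $\unit \ot \unit \cong \unit$.

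This is the naturality of the tensor product of extensions with respect to morphisms in the other variable: for a morphism $f \colon A \to B$ and an extension $\eta \colon \unit \to \unit$, one has $(\eta \ot B) \circ f = (\unit \ot f) \circ \eta$ in $\Ext^n(A, B)$ (up to unit isomorphisms). This holds because $\eta \ot f$ can be computed in two ways, by bifunctoriality of $\ot$ on extensions, or at the chain level from the naturality of the unitors. Applying it with $f = \mathrm{coev}$ gives the required identity.

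I expect the obstacle to be the sign and unitor bookkeeping in this interchange step. It should be handled by writing $\eta$ as an exact sequence and checking that the two resulting extensions are joined by an explicit morphism of exact sequences, so that they are equivalent, with no sign entering since one factor has degree $0$.
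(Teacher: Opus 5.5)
Your proof is correct and takes the same route as the paper. The paper also starts from the adjunction $(- \ot X, - \ot X^*)$, but then simply invokes a general Eckmann--Shapiro lemma for adjoint pairs of exact functors between abelian categories \cite[Theorem 2.1]{Be1}; you prove that lemma by hand in this case, and your interchange identity $(\eta \ot X \ot X^*) \circ \mathrm{coev} = \mathrm{coev} \circ \eta$ is exactly the naturality of the adjunction unit with respect to extensions. (One small slip: for general $f \colon A \to B$ the interchange should read $(\eta \ot B) \circ f = f \circ (\eta \ot A)$, which reduces to what you need when $A = \unit$.)
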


\begin{proof}
By \cite[Proposition 2.10.8]{EGNO} the exact functors $- \ot X$ and $- \ot X^*$ form an adjoint pair $\left ( - \ot X, - \ot X^* \right )$. Now apply \cite[Theorem 2.1]{Be1} with the categories $\A$ and $\B$ replaced by our category $\C$, the functors $F$ and $G$ replaced by $- \ot X$ and $- \ot X^*$, respectively, and the object $X \in \A$ replaced by the unit object $\unit \in \C$. Note that \cite[Theorem 2.1]{Be1} is a generalized Eckmann-Shapiro lemma for abelian categories.
\end{proof}

We have now come to the main result. Recall first that, by definition, there are only finitely many simple objects of $\C$, up to isomorphism; let us denote them by $S_1, \dots, S_n$. Their projective covers $P_1, \dots, P_n$ are then indecomposable, and these are precisely the indecomposable projective objects of $\C$. The projective object $P_1 \oplus \cdots \oplus P_n$ is therefore a (minimal) projective generator of $\C$, in the sense that every projective object of $\C$ is a direct summand of a direct sum of copies of this object. Now let $P \in \C$ be any projective generator, and consider the finite-dimesional $k$-algebra $\End_{\C}(P)^{\op}$. As remarked in \cite[Remark 2.1]{BEPW}, the functor
\begin{center}
\begin{tikzpicture}
\diagram{d}{3em}{6em}{
\C & \mod \End_{\C}(P)^{\op} \\
 };
\path[->, font = \scriptsize, auto]
(d-1-1) edge node{$\Hom_{\C}(P,-)$} (d-1-2);
\end{tikzpicture}
\end{center}
is an (exact) equivalence of $k$-linear abelian categories, where $\mod \End_{\C}(P)^{\op}$ denotes the category of finitely generated left modules over $ \End_{\C}(P)^{\op}$. It follows from this that if $P$ and $Q$ are projective generators of $\C$, then $\End_{\C}(P)^{\op}$ and $\End_{\C}(Q)^{\op}$ are necessarily Morita equivalent.

Recall also that if $\left ( \C, \ot_{\C}, \unit_{\C} \right )$ and $\left ( \D, \ot_{\D}, \unit_{\D} \right )$ are two finite tensor categories, then their \emph{Deligne tensor product} $\C \boxtimes \D$ exists, and is again a finite tensor category. It is the $k$-linear abelian category universal with respect to right exact bifunctors on $\C \times \D$; see \cite{D} and \cite[Sections 1.11 and 4.6]{EGNO} for details. The canonical bifunctor from $\C \times \D$ to $\C \boxtimes \D$ 
is exact, and the image of an object $(X,Y) \in \C \times \D$ is denoted by $X \boxtimes Y$.

Finally, recall that if $V = \oplus_{n=0}^{\infty} V_n$ is a graded $k$-vector space of finite type (so that $\dim_k V_n$ is finite for all $n$), then its rate of growth is 
$$\gamma (V) \stackrel{\text{def}}{=} \inf \left \{ c \in \mathbb{N} \cup \{ 0 \} \mid \text{there exists } a \in \mathbb{R} \text{ with } \dim_k V_n \le an^{c-1} \text{ for } n \gg 0 \right \}$$
For a graded $k$-algebra $H = \oplus_{n=0}^{\infty} H_n$ of finite type, the Krull dimension of $H$ is now defined as $\gamma (H)$. When $H$ is graded-commutative and finitely generated as a $k$-algebra, then this is a finite integer. Namely, by the (graded-commutative version of the) Hilbert-Serre theorem, the Hilbert series $\sum_{n=0}^{\infty} ( \dim_k H_n ) x^n$ is a rational function of the form $f(x) / \prod_{i=1}^t ( x^{m_i}-1 )$, where $t$ is the number of homogeneous generators of $H$, and $m_1, \dots, m_t$ are their degrees. By \cite[Proposition 5.3.2]{B}, the order of the pole of the Hilbert series, at $x=1$, equals $\gamma (H)$, and so the latter is a finite integer. In this case, we can also define the Krull dimension in terms of chains of prime ideals for a certain commutative subalgebra. More precisely, define $H^{\bullet}$ to be just $H$ when the characteristic of $k$ is two, and set $H^{\bullet} = \oplus_{n=0}^{\infty} H_{2n}$ otherwise. This is then a graded $k$-algebra which is commutative in the ordinary sense, and it follows from \cite[Lemma 5.3.6]{B} that $\gamma ( H^{\bullet} ) = \gamma (H)$. By \cite[Lemma 5.4.6]{B}, the rate of growth $\gamma ( H^{\bullet} )$ equals the classical Krull dimension of $H^{\bullet}$ defined in terms of chains of prime ideals.

\begin{theorem}\label{thm:main}
Let $k$ be a field, $\left ( \C, \ot, \unit \right )$ a finite tensor category over $k$, and consider the following statements:
\begin{enumerate}
\item[\emph{(1)}] \emph{\textbf{Fgt}} holds for $\left ( \C, \ot, \unit \right )$;  
\item[\emph{(2)}] For some projective generator $P \in \C$, \emph{\textbf{Fg}} holds for the $k$-algebra $\End_{\C}(P)^{\op}$;
\item[\emph{(3)}] For every projective generator $P \in \C$, \emph{\textbf{Fg}} holds for the $k$-algebra $\End_{\C}(P)^{\op}$;
\item[\emph{(4)}] \emph{\textbf{Fg}} holds for some finite-dimensional $k$-algebra $A$ with $\mod A$ equivalent to $\C$;
\item[\emph{(5)}] \emph{\textbf{Fg}} holds for every finite-dimensional $k$-algebra $A$ with $\mod A$ equivalent to $\C$.
\end{enumerate}
Then \emph{(2)} $\Longrightarrow$ \emph{(1)}, and the four statements \emph{(2)}, \emph{(3)}, \emph{(4)}, \emph{(5)} are equivalent. Moreover, when the ground field $k$ is algebraically closed, then all five statements are equivalent, and the Krull dimension of $\Coh^* ( \C )$ equals that of $\HH^* \left ( \End_{\C}(P)^{\op} \right )$ and $\HH^* (A)$ for $P$ as in \emph{(2)}/\emph{(3)} and $A$ as in \emph{(4)}/\emph{(5)}.
\end{theorem}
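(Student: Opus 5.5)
The plan is to separate the Morita-theoretic part, which is formal, from the passage between $\C$ and an algebra $A$ with $\mod A \simeq \C$. For (2)--(5) I would use that \textbf{Fg} is Morita invariant. A Morita equivalence $\mod A \simeq \mod B$ is induced by a bimodule tilting pair, so it gives $\mod A^{\e} \simeq \mod B^{\e}$ sending $A$ to $B$. Hence it yields an isomorphism $\HH^*(A) \cong \HH^*(B)$, compatible with the maps $\psi_M$ and with $\Ext_A^*(M,M) \cong \Ext_B^*(M',M')$. Any $A$ with $\mod A \simeq \C$ is Morita equivalent to $\End_{\C}(P)^{\op}$ for every projective generator $P$, via $\Hom_{\C}(P,-)$. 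Therefore (2), (3), (4), (5) are all equivalent.

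For (2) $\Rightarrow$ (1), put $A = \End_{\C}(P)^{\op}$ and identify $\C$ with $\mod A$. Let $U$ be the $A$-module corresponding to $\unit$. Then $\psi_U \colon \HH^*(A) \to \Ext_{\C}^*(\unit,\unit) = \Coh^*(\C)$ is a ring map with central image. By \textbf{Fg}, $\Coh^*(\C)$ is a finitely generated module over the finitely generated algebra $\HH^*(A)$, so $\Coh^*(\C)$ is a finitely generated $k$-algebra. Now take $X \in \C$ and set $Y = X \ot X^*$. Applying \textbf{Fg} to $U \oplus Y$ shows that $\Ext_{\C}^*(\unit, Y)$ is a finitely generated right $\HH^*(A)$-module. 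The right action on $\Ext^*(\unit,Y)$ goes through $\psi_U$ followed by Yoneda composition. This is exactly the right $\Coh^*(\C)$-action restricted along $\psi_U$. Hence $\Ext_{\C}^*(\unit, Y)$ is finitely generated over $\Coh^*(\C)$, and Lemma \ref{lem:ES} transfers this to $\Ext_{\C}^*(X,X)$. So \textbf{Fgt} holds.

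For (1) $\Rightarrow$ (2) when $k$ is algebraically closed, I would work in the Deligne product. In that case $\mod A^{\e} \simeq \mod A \boxtimes \mod A^{\op} \simeq \C \boxtimes \C^{\vee}$, using $\Hom_k(-,k)$ to identify $\mod A^{\op}$ with $\C^{\vee}$. This is a finite tensor category, and the regular bimodule $A$ is one of its objects.
\begin{enumerate}
\item[(a)] By Remark \ref{rem:dual}, \textbf{Fgt} holds for $\C^{\vee}$. A K\"unneth argument, giving $\Coh^*(\C\boxtimes\C^{\vee}) \cong \Coh^*(\C)\ot_k \Coh^*(\C^{\vee})$ and similarly for $\Ext$ between objects $X\boxtimes Y$, together with a filtration argument for general objects, gives \textbf{Fgt} for $\C\boxtimes\C^{\vee}$.
\item[(b)] Consequently $\HH^*(A) = \Ext^*_{\C\boxtimes\C^\vee}(A,A)$ is a finite module over the central image of $\Coh^*(\C\boxtimes\C^{\vee})$, so it is a finitely generated algebra.
\item[(c)] For modules, I would construct a ring map $\rho \colon \Coh^*(\C) \to \HH^*(A)$. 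The functor $X \mapsto X \ot -$ corresponds to an exact functor $\C \to \mod A^{\e}$, $X \mapsto B_X$ with $B_{\unit}=A$, and $\rho$ is the map it induces on $\Ext^*(\unit,\unit)$. I would then check that $\psi_M \circ \rho$ is the tensor-induced map $\Coh^*(\C)\to\Ext_A^*(M,M)$, namely the left-tensoring version of $\varphi_M$. This agrees with $\varphi_M$ up to the sign convention, by the result of \cite{BPW2} quoted earlier. Then \textbf{Fgt} makes $\Ext_A^*(M,M)$ finite over $\Coh^*(\C)$, hence over $\HH^*(A)$.
\end{enumerate}

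For the Krull dimensions, $\psi_U\circ\rho$ is the identity on $\Coh^*(\C)$. This gives $\gamma(\Coh^*(\C)) \le \gamma(\HH^*(A))$, since $\Coh^*(\C)$ is a direct summand of $\HH^*(A)$ as a graded vector space.

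The step I expect to be the main obstacle is the reverse inequality. As set up in (a)--(b), $\HH^*(A)$ is only known to be finite over the image of $\Coh^*(\C)\ot\Coh^*(\C^{\vee})$, which a priori allows growth up to $2\gamma(\Coh^*(\C))$. I would first try to prove that the actions of $\Coh^*(\C)\ot 1$ and $1\ot\Coh^*(\C^{\vee})$ on $\Ext^*(A,A)$ agree up to the anti-isomorphism of Remark \ref{rem:dual}. This is a bimodule analogue of the left/right coincidence results of \cite{BPW2} and \cite{SS}, and it would make $\HH^*(A)$ finite over $\rho(\Coh^*(\C))$. That yields equality of growth rates, and by the discussion preceding the theorem, equality of Krull dimensions.
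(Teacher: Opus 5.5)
Most of your argument is sound. Morita invariance of \textbf{Fg} handles (2)--(5), your direct version of (2) $\Rightarrow$ (1) works, and so does your route to \textbf{Fg} in (1) $\Rightarrow$ (2). Your map $\rho$ is a genuinely different device from the paper's. The paper instead obtains finite generation of $\Ext^*_{A^{\e}}(A,V)$ for all bimodules $V$ and then invokes \cite[Lemma 3.2(1)]{Be1}. A small correction: since $B_X$ represents $X \ot -$, you get $\rho(\eta) \ot_A M \cong \eta \ot M = \varphi_M(\eta)$ exactly. So no sign and no appeal to \cite{BPW2} is needed, and that result concerns left versus right Yoneda actions, not $X \ot -$ versus $- \ot X$.

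The gap is the final claim of the theorem, equality of Krull dimensions. You prove only $\gamma(\Coh^*(\C)) \le \gamma(\HH^*(A))$, from $\psi_U \circ \rho = \mathrm{id}$. For the reverse inequality you propose showing that $\Coh^*(\C) \ot 1$ and $1 \ot \Coh^*(\C^{\vee})$ act compatibly on $\Ext^*_{A^{\e}}(A,A)$. You do not prove this, and it is not automatic: it requires working out how the Deligne tensor structure of $\C \boxtimes \C^{\vee}$ acts on the object corresponding to the identity functor under $\C \boxtimes \C^{\vee} \simeq \mod A^{\e}$. As written, that half of the statement is unproved.

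The paper closes this with complexity. By \cite[Theorem 2.11(1)]{BPW2}, $\cx_{\C}(X) \le \cx_{\C}(\unit) = d$ for every $X$. So $d$ is the maximal complexity in $\mod A$, and it is attained at $A/\mathfrak{r}$. Since \textbf{Fg} holds for $A$, the proof of \cite[Corollary 3.6]{Be0} identifies the Krull dimension of $\HH^*(A)$ with $\cx_A(A/\mathfrak{r})$.

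Within your own setup you could also argue directly:
$$\gamma(\HH^*(A)) \le \cx_{A^{\e}}(A) = \cx_A(A/\mathfrak{r}) = \gamma(\Ext^*_A(A/\mathfrak{r},A/\mathfrak{r})) \le \gamma(\Coh^*(\C)).$$
\begin{itemize}
\item The middle equality holds because the minimal projective bimodule resolution of $A$, tensored with $A/\mathfrak{r}$, is a minimal resolution of $A/\mathfrak{r}$ when $k$ is algebraically closed.
\item The last inequality holds because $\Ext^*_A(A/\mathfrak{r},A/\mathfrak{r})$ is a finitely generated $\Coh^*(\C)$-module by \textbf{Fgt}.
\end{itemize}
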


\begin{proof}
As remarked before the theorem, given a projective generator $P$ of $\C$, the categories $\mod \End_{\C}(P)^{\op}$ and $\C$ are equivalent. In general, if $A$ and $B$ are two finite-dimensional $k$-algebras with both $\mod A$ and $\mod B$ equivalent to $\C$, then they are Morita equivalent. They are then also derived equivalent, and so it follows from \cite[Theorem]{KPS} that \textbf{Fg} holds for one of them if and only if it holds for the other. Therefore the statements (2), (3), (4), (5) are equivalent.

\sloppy Suppose now that (2) holds. Denote the $k$-algebra $\End_{\C}(P)^{\op}$ by just $A$, and take any module $M \in \mod A$. Since the Hochschild cohomology ring $\HH^* (A)$ is finitely generated, it is Noetherian as a ring, and then so is also the image of the graded ring homomorphism 
\begin{center}
\begin{tikzpicture}
\diagram{d}{3em}{3em}{
\HH^* (A) & \Ext_A^*(M,M) \\
 };
\path[->, font = \scriptsize, auto]
(d-1-1) edge node{$\psi_M$} (d-1-2);
\end{tikzpicture}
\end{center}
induced by the tensor product $- \ot_A M$. By assumption, the graded $k$-algebra $\Ext_A^*(M,M)$ is finitely generated as a left and as a right $\HH^* (A)$-module via $\psi_M$, hence it is also left and right module finite over the Noetherian subalgebra $\Ima \psi_M$. Consequently, it is a Noetherian ring itself. 

For every module $N \in \mod A$, the graded $k$-algebra $\Ext_A^*(M \oplus N,M \oplus N)$ is finitely generated as a left and as a right $\HH^* (A)$-module via $\psi_{M \oplus N}$. Since $\Ext_A^*(M,N)$ is a direct summand of $\Ext_A^*(M \oplus N,M \oplus N)$ as a left and a right module over $\HH^* (A)$, it, too, is finitely generated as such. But $\Ext_A^*(M,N)$ is a right $\HH^* (A)$-module via the ring homomorphism $\psi_M$ followed by Yoneda composition, and therefore it is finitely generated as a right module over $\Ext_A^*(M,M)$.

To sum up: for every module $M \in \mod A$, the $k$-algebra $\Ext_A^*(M,M)$ is a Noetherian ring, and $\Ext_A^*(M,N)$ is a finitely generated right $\Ext_A^*(M,M)$-module for every $N \in \mod A$. Now since the exact functor $\Hom_{\C}(P,-)$ induces an equivalence between the $k$-linear abelian categories $\C$ and $\mod A$, the same must hold in $\C$. That is, for every object $X \in \C$, the $k$-algebra $\Ext_{\C}^*(X,X)$ is a Noetherian ring, and $\Ext_{\C}^*(X,Y)$ is a finitely generated right $\Ext_{\C}^*(X,X)$-module for every $Y \in \C$. In particular, by taking $X = \unit$, we see that the cohomology ring $\Coh^* ( \C )$ is Noetherian, that is, finitely generated as a $k$-algebra. Moreover, given an object $X \in \C$, the right $\Coh^* ( \C )$-module $\Ext_{\C}^* ( \unit, X \ot X^* )$ is finitely generated. By Lemma \ref{lem:ES}, this implies that $\Ext_{\C}^* ( X,X )$ is also finitely generated as a right $\Coh^* ( \C )$-module, via the tensor product induced ring homomorphism $\varphi_X$ followed by Yoneda composition. This shows that (1) holds.

Suppose now that the ground field $k$ is algebraically closed, and that (1) holds, so that \textbf{Fgt} holds for $\left ( \C, \ot, \unit \right )$. Then by Remark \ref{rem:dual}, the same finiteness condition holds for the dual finite tensor category $\left ( \C^{\vee}, \ot, \unit \right )$. With $k$ being perfect, we can now apply \cite[Lemma 5.3]{NP}: the Deligne tensor product $\C \boxtimes \C^{\vee}$ satisfies \textbf{Fgt} as a finite tensor category over $k$. In other words, the cohomology ring $\Coh^*( \C \boxtimes \C^{\vee} )$ is a finitely generated $k$-algebra, and $\Ext_{\C \boxtimes \C^{\vee}}^*(X,X)$ is a finitely generated $\Coh^*( \C \boxtimes \C^{\vee} )$-module for every object $X \in \C \boxtimes \C^{\vee}$. The $\Coh^*( \C \boxtimes \C^{\vee} )$-module structure is via the ring homomorphism 
\begin{center}
\begin{tikzpicture}
\diagram{d}{3em}{3em}{
\Coh^*( \C \boxtimes \C^{\vee} ) & \Ext_{\C \boxtimes \C^{\vee}}^*(X,X) \\
 };
\path[->, font = \scriptsize, auto]
(d-1-1) edge node{$\varphi_X$} (d-1-2);
\end{tikzpicture}
\end{center}
induced by the tensor product (in $\C \boxtimes \C^{\vee}$)  with $X$, and so an argument similar to the one above shows the following: $\Ext_{\C \boxtimes \C^{\vee}}^*(X,X)$ is a Notherian ring, and $\Ext_{\C \boxtimes \C^{\vee}}^*(X,Y)$ is a finitely generated right $\Ext_{\C \boxtimes \C^{\vee}}^*(X,X)$-module for every object $Y \in \C \boxtimes \C^{\vee}$. Now if $Q \in \C \boxtimes \C^{\vee}$ is any projective generator, then the exact functor $\Hom_{\C \boxtimes \C^{\vee}}(Q,-)$ induces an equivalence between the $k$-linear abelian categories $\C \boxtimes \C^{\vee}$ and $\mod B$, where $B = \End_{\C \boxtimes \C^{\vee}}(Q)^{\op}$. Therefore, the same finiteness condition holds in $\mod B$: for every module $U \in \mod B$, the ring $\Ext_B^*(U,U)$ is Noetherian, and $\Ext_B^*(U,V)$ is a finitely generated right $\Ext_B^*(U,U)$-module for every $V \in \mod B$.

Let $S_1, \dots, S_n$ be the simple objects of $\C$, and take their projective covers $P_1, \dots, P_n$, the indecomposable projective objects. The $S_i$ are also the simple objects of the dual category $\C^{\vee}$. Moreover, since finite tensor categories are quasi-Frobenius (cf.\ \cite[Proposition 6.1.3]{EGNO}), and injective and projective objects change roles when passing to dual categories, the objects $P_1, \dots, P_n$ are also the indecomposable projective objects of $\C^{\vee}$. Therefore, there exists a permutation $\sigma$ of $\{ 1, \dots, n \}$ with the property that for each $i$, the projective cover of $S_i$ in $\C^{\vee}$ is $P_{\sigma (i)}$. 

Since $k$ is algebraically closed, the objects $S_i \boxtimes S_j$ are the simple objects of $\C \boxtimes \C^{\vee}$. Furthermore, since the canonical bifunctor from $\C \times \C^{\vee}$ to $\C \boxtimes \C^{\vee}$ is exact, there is, for each pair $i,j \in \{ 1, \dots, n \}$, an epimorphism 
\begin{center}
\begin{tikzpicture}
\diagram{d}{3em}{3em}{
P_i \boxtimes  P_{\sigma (j)} & S_i \boxtimes S_j \\ 
};
\path[->, font = \scriptsize, auto]
(d-1-1) edge (d-1-2);
\end{tikzpicture}
\end{center}
in $\C \boxtimes \C^{\vee}$. The object $P_i \boxtimes  P_{\sigma (j)}$ is projective; as can be seen for example by considering its cohomology with the simple objects. More precisely, let $S \boxtimes S'$ be one of the simple objects of $\C \boxtimes \C^{\vee}$. By \cite[Proposition 5.13(v)]{D}, there is an isomorphism
$$\Hom_{\C \boxtimes \C^{\vee}} ( P_i \boxtimes  P_{\sigma (j)}, S \boxtimes S' )  \simeq \Hom_{\C} ( P_i, S ) \otimes_k \Hom_{\C^{\vee}} ( P_{\sigma (j)}, S' )$$
and one similarly obtains an isomorphism
$$\Ext_{\C \boxtimes \C^{\vee}}^* ( P_i \boxtimes  P_{\sigma (j)}, S \boxtimes S' )  \simeq \Ext_{\C}^* ( P_i, S ) \otimes_k \Ext_{\C^{\vee}}^* ( P_{\sigma (j)}, S' )$$
As $P_i$ and $P_{\sigma (j)}$ are projective in $\C$ and $\C^{\vee}$, we see that $\Ext_{\C \boxtimes \C^{\vee}}^i ( P_i \boxtimes  P_{\sigma (j)}, S \boxtimes S' ) = 0$ for $i \ge 1$. Thus the object $P_i \boxtimes  P_{\sigma (j)}$ is projective in $\C \boxtimes \C^{\vee}$. The epimorphism from this object to the simple object $S_i \boxtimes S_j$ shows it has the projective cover of the latter as a direct summand.

Consider the projective generator $P = P_1 \oplus \cdots \oplus P_n$ of $\C$. By the above, and the fact that the simple objects of $\C \boxtimes \C^{\vee}$ are the $n^2$ objects $S_i \boxtimes S_j$ for $i,j \in \{ 1, \dots, n \}$, we see that $P \boxtimes P \simeq \oplus_{i,j} P_i \boxtimes  P_{\sigma (j)}$ is a projective generator of $\C \boxtimes \C^{\vee}$. Therefore, from earlier in the proof, the $k$-algebra $B = \End_{\C \boxtimes \C^{\vee}} ( P \boxtimes  P )^{\op}$ satisfies the following: for every module $U \in \mod B$, the ring $\Ext_B^*(U,U)$ is Noetherian, and $\Ext_B^*(U,V)$ is a finitely generated right $\Ext_B^*(U,U)$-module for every $V \in \mod B$. Now, there are isomorphisms
\begin{eqnarray*}
\End_{\C \boxtimes \C^{\vee}} ( P \boxtimes  P )^{\op}  & \simeq & \left ( \End_{\C} ( P ) \otimes_k \End_{\C^{\vee}} ( P ) \right )^{\op} \\
& \simeq & \End_{\C} ( P )^{\op} \otimes_k \End_{\C^{\vee}} ( P )^{\op} \\
& \simeq & \End_{\C} ( P )^{\op} \otimes_k \End_{\C} ( P )
\end{eqnarray*}
of $k$-algebras, giving $B \simeq A \otimes_k A^{\op} = A^{\e}$ for $A = \End_{\C} ( P )^{\op}$. Consequently, the Hochschild cohomology ring $\HH^* (A) = \Ext_{A^{\e}}^*( A,A)$ is Noetherian, that is, finitely generated. Moreover, for every module $V \in \mod A^{\e}$, that is, a finitely generated $A$-bimodule, the right $\HH^* (A)$-module $\Ext_{A^{\e}}^*( A,V)$ is finitely generated. Therefore, by \cite[Lemma 3.2(1)]{Be1}, the finiteness condition \textbf{Fg} holds for the $k$-algebra $A$. 

Finally, suppose that $k$ is algebraically closed, and that the Krull dimension of $\Coh^* ( \C )$ is $d$, say. For an object $X \in \C$, we denote its complexity by $\cx_{\C} (X)$; this is equal to the rate of growth of the lengths of the object in its minimal projective resolution. It follows from \cite[Theorem 2.11(1)]{BPW2} that $\cx_{\C} (X) \le \cx_{\C} ( \unit ) =d$ for every $X \in \C$, so that 
$$\max \{ \cx_{\C} (X) \mid X \in \C \} =d$$
Now let $P$ be any projective generator of $\C$, and denote $\End_{\C}(P)^{\op}$ by $A$ as before. Since $\C$ and $\mod A$ are equivalent as abelian categories, we obtain
$$\max \{ \cx_{A} (M) \mid M \in \mod A \} =d$$
where the complexity $\cx_{A} (M)$ of an $A$-module $M$ is defined in the same way as for objects of $\C$. The maximal complexity is always obtained by at least one of the simple modules, so that $\cx_{A} ( A / \mathfrak{r} ) =d$, where $\mathfrak{r}$ is the Jacobson radical of $A$. It follows from the proof of \cite[Corollary 3.6]{Be0} that $\cx_{A} ( A / \mathfrak{r} )$ equals the Krull dimension of $\HH^* (A)$, which must therefore be $d$. Finally, it is well known that derived equivalent algebras have isomorphic Hochschild cohomology rings; see \cite[Theorem 4.2]{H} and \cite[Proposition 2.5]{R}. This concludes the proof.
\end{proof}

Let us look at some consequences. The first one concerns finite generation of cohomology over finite-dimensional Hopf algebras. For such an algebra $A$, the category $\mod A$ is a finite tensor category, so Theorem \ref{thm:main} applies. 

\begin{corollary}\label{cor:Hopf}
Let $A$ be a finite-dimensional Hopf algebra. If \emph{\textbf{Fg}} holds for $A$, then \emph{\textbf{Fgt}} holds for $\mod A$, and the converse holds when the ground field is algebraically closed.
\end{corollary}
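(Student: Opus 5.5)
This is a direct application of Theorem~\ref{thm:main}. We take $\C = \mod A$, with the tensor product $\ot_k$ (with the diagonal action via the comultiplication) and unit object $\unit = k$ (the trivial module via the counit). Since $A$ is a finite-dimensional Hopf algebra, this is a finite tensor category over $k$: it is abelian, $k$-linear, locally finite, and has finitely many simples, each with a projective cover. The unit $k$ is one-dimensional, hence simple. Rigidity comes from the antipode, which is bijective for finite-dimensional Hopf algebras; it yields left and right duals $M^* = \Hom_k(M,k)$ with the two twisted actions. So Theorem~\ref{thm:main} applies with $\C = \mod A$.

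The key observation is that $A$ itself is a finite-dimensional $k$-algebra with $\mod A$ equivalent (indeed equal) to $\C$. The identity functor therefore witnesses statement (4) of Theorem~\ref{thm:main} for the algebra $A$. Equivalently, $A$ is a projective generator of $\C$ and $\End_{\C}(A)^{\op} \simeq A$, so $A$ is also a valid choice in (2).

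For the first implication, assume \textbf{Fg} holds for $A$. Then (4) holds, and since (4), (2), (3), (5) are equivalent, so does (2). The implication (2) $\Longrightarrow$ (1) of Theorem~\ref{thm:main} gives \textbf{Fgt} for $\mod A$.

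For the converse, assume $k$ is algebraically closed and \textbf{Fgt} holds for $\mod A$, which is statement (1). The theorem says all five statements are then equivalent, so (5) holds. Since $A$ is among the algebras $B$ with $\mod B$ equivalent to $\C$, \textbf{Fg} holds for $A$.

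There is no real obstacle. The only point needing care is confirming that $\mod A$ meets every axiom of a finite tensor category in the sense of Section~\ref{sec:prelim}; rigidity is the one axiom that depends on the bijectivity of the antipode. One should also check that the notions match: \textbf{Fgt} refers to the monoidal structure via $\ot_k$, while \textbf{Fg} refers to $A$ purely as an algebra, and Theorem~\ref{thm:main} is exactly what bridges the two.
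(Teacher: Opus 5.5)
Your proposal is correct and is essentially the paper's argument: the paper just observes that $\mod A$ is a finite tensor category and applies Theorem~\ref{thm:main}. You do the same, with $A$ itself (equivalently $\End_A(A)^{\op} \simeq A$ for the projective generator $A$) as the algebra in (2)/(4)/(5), and you add a justification of rigidity via the bijective antipode that the paper leaves implicit.
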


It should be noted that, in this case, the result is actually true without the assumption that $k$ is algebraically closed; see \cite[Lemma 4.2]{Be1}, which is a strengthening of \cite[Proposition 3.4]{NWW}. As an example, let $k$ be a field, $G$ a finite group, and consider the group algebra $kG$. This is a finite-dimensional Hopf algebra, and \textbf{Fgt} holds for $\mod kG$ by classical results of Evens, Golod, and Venkov; cf.\ \cite{E,G,V}. Therefore \textbf{Fg} holds for $kG$; see, for example, \cite[Example 3.2]{W1}.

Finally, recall that an abelian category is of \emph{finite representation type} if it contains only finitely many isomorphism classes of indecomposable objects. The following result shows that \textbf{Fgt} holds for such finite tensor categories, over algebraically closed fields.

\begin{corollary}\label{cor:finreptype}
If $\left ( \C, \ot, \unit \right )$ is a finite tensor category of finite representation type over an algebraically closed field, then \emph{\textbf{Fgt}} holds.
\end{corollary}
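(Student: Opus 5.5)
Since $k$ is algebraically closed, Theorem \ref{thm:main} lets us replace \textbf{Fgt} for $\C$ by \textbf{Fg} for the algebra $A = \End_{\C}(P)^{\op}$, where $P$ is a projective generator. The equivalence $\C \simeq \mod A$ transports finite representation type, so $A$ is a finite-dimensional algebra of finite representation type. The task is then to verify \textbf{Fg} for $A$. We will not attack \textbf{Fg} for $A$ directly. Instead we use the route that the proof of Theorem \ref{thm:main} really goes through: finite generation of $\Ext$-algebras inside the category, transported by equivalences.

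The plan is to prove \textbf{Fgt} for $\C$ directly, via the Noetherian criterion from the proof of Theorem \ref{thm:main}. Concretely, we show that $\Coh^*(\C)$ is finitely generated and that $\Ext_{\C}^*(\unit, Y)$ is a finitely generated right $\Coh^*(\C)$-module for every $Y$. Lemma \ref{lem:ES} then gives finite generation of $\Ext_{\C}^*(X,X) \simeq \Ext_{\C}^*(\unit, X \ot X^*)$.

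The key input is finite representation type. Let $\Omega$ denote the syzygy operator; it is defined on objects of $\C$, since projective covers exist and the minimal projective resolution is unique. There are finitely many indecomposables, so the sequence of non-projective indecomposable summands of $\Omega^n \unit$ (with $\unit$ indecomposable, being simple) ranges over a finite set. Moreover $\C$ is quasi-Frobenius, so $\Omega$ is injective on non-projective indecomposables up to isomorphism. Hence $\Omega$ permutes the finitely many non-projective indecomposables, and $\Omega^p \unit \simeq \unit$ for some $p \ge 1$, unless $\unit$ is projective. In the projective case $\C$ is semisimple and everything is trivial. A periodicity isomorphism $\Omega^p \unit \simeq \unit$ yields a class $\eta \in \Coh^p(\C)$ that is invertible in stable cohomology. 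Yoneda multiplication by $\eta$ then induces isomorphisms $\Ext^n_{\C}(\unit, Y) \to \Ext^{n+p}_{\C}(\unit, Y)$ for all $n \ge 1$: these groups are stable Hom groups $\underline{\Hom}(\Omega^n \unit, Y)$, and $\eta$ corresponds to the isomorphism $\Omega^{n+p}\unit \simeq \Omega^n\unit$. We should check that the right action by $\eta$ is indeed given by this shift. This is plausible since the action is the Yoneda product and Yoneda products are realized by composing syzygy maps.

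It follows that $\Coh^*(\C)$ is generated by $\eta$ together with $\Coh^n(\C)$ for $n \le p$, so it is finitely generated. Likewise $\Ext_{\C}^*(\unit, Y)$ is generated over $k[\eta]$ by its components in degrees $\le p$, each of which is finite-dimensional. This gives \textbf{Fgt}; by Theorem \ref{thm:main} it is then equivalent to \textbf{Fg} for $A$, although that is not needed. The main obstacle is the periodicity step: it relies on $\Omega$ being a bijection on isoclasses of non-projective indecomposables, which uses that projectives are injective in $\C$, and on $\eta$ acting by the stable shift. If the latter gives trouble, we can alternatively use the fact that the tensor functor $-\ot Y$ commutes with $\Omega$ up to projective summands, and run the periodicity argument on $\unit$ alone.
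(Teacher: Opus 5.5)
Your proof is correct, but it takes a genuinely different route from the paper. The paper passes to the selfinjective algebra $A=\End_{\C}(P)^{\op}$ of finite representation type. It then invokes Dugas's theorem that such an algebra is periodic as a bimodule, $\Omega^n_{A^{\e}}(A)\simeq A$, which gives \textbf{Fg} for $A$: both $\HH^*(A)$ and every $\Ext_A^*(M,M)$ are finite over $k[\eta]$ for the resulting $\eta\in\HH^n(A)$. Finally it applies the implication (2) $\Longrightarrow$ (1) of Theorem \ref{thm:main}.

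You instead stay inside $\C$ throughout:
\begin{enumerate}
\item[(a)] Pigeonhole on the finitely many indecomposables, together with the Frobenius property, gives $\Omega^p\unit\simeq\unit$.
\item[(b)] The resulting class $\eta\in\Coh^p(\C)$, represented by a stable isomorphism $g\colon\Omega^p\unit\to\unit$, acts on $\Ext^n_{\C}(\unit,Y)\simeq\underline{\Hom}(\Omega^n\unit,Y)$ by $f\mapsto f\circ\Omega^n(g)$. This action is bijective for $n\ge 1$.
\item[(c)] Lemma \ref{lem:ES} transfers finite generation from $\Ext^*_{\C}(\unit,X\ot X^*)$ to $\Ext^*_{\C}(X,X)$.
\end{enumerate}
The two points you flagged are standard and hold. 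In a Frobenius category, $\Omega$ is an autoequivalence of the stable category. Hence it sends indecomposable non-projectives to indecomposable non-projectives, which you also need for $\Omega$ to permute them, and it is injective on their isoclasses. The Yoneda product corresponds to precomposition with $\Omega^n(g)$ on stable Hom groups. Your fallback via $-\ot Y$ is therefore unnecessary.

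What your route buys: it is elementary, with no Hochschild cohomology, no Dugas, and no Theorem \ref{thm:main}. It exploits the tensor structure, in that one-sided periodicity of $\unit$ alone controls all cohomology. It also never uses that $k$ is algebraically closed, so it proves \textbf{Fgt} for finite tensor categories of finite representation type over an arbitrary field.

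What the paper's route buys: it establishes \textbf{Fg} for $\End_{\C}(P)^{\op}$ directly, i.e.\ finiteness of Hochschild cohomology, without passing through the harder implication (1) $\Longrightarrow$ (2). That implication relies on Deligne products and \cite{NP}. The price is algebraic closure, which enters through Dugas's bimodule periodicity. Over an algebraically closed field, your result combined with Theorem \ref{thm:main} recovers \textbf{Fg} for $A$ as well.
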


\begin{proof}
Let $P$ be a projective generator of $\C$, and consider the $k$-algebra $A =  \End_{\C} ( P )^{\op}$. We know that $\C$ and $\mod A$ are equivalent as abelian categories, hence $A$ has finite representation type. Moreover, since $\C$ is a quasi-Frobenius category, so is $\mod A$, that is, the algebra $A$ is selfinjective. Therefore, as shown in \cite{Du}, the algebra is periodic as a bimodule: there exists a positive integer $n$ for which $\Omega_{A^{\e}}^n (A) \simeq A$ (this is where we need to assume that the ground field is algebraically closed). This isomorphism corresponds to a homogeneous element $\eta \in \HH^* (A)$ of degree $n$, with the property that $\HH^* (A)$ is finitely generated as a module over the polynomial subalgebra $k[ \eta ]$. Hence $\HH^* (A)$ is a finitely generated $k$-algebra (see also \cite[Corollary 2.14]{ES}). Moreover, it is not hard to see that $\Ext_A^*(M,M)$ is a finitely generated $\HH^* (A)$-module for every $M \in \mod A$. In fact, every indecomposable non-projective such $M$ is periodic of period dividing $n$, and $\Ext_A^*(M,M)$ is a finitely generated as a module over $k[ \eta ]$. Therefore \textbf{Fg} holds for the algebra $A$, and then by Theorem \ref{thm:main}, \textbf{Fgt} holds for $\left ( \C, \ot, \unit \right )$.
\end{proof}

%%%%%%%%%%%%%%%%%%%%%%%%%%%%%%%%%%%%%%%%%%%%%%%%%%

%%%%%%%%%%%%%%%%%%%%%%%%%%%%%%%%%%%%%%%%%%%%%%%%


\begin{thebibliography}{999}

\bibitem{B}
D.J.\ Benson, \emph{Representations and cohomology. II. Cohomology of groups and modules}, Cambridge Stud.\ Adv.\ Math., 31, Cambridge University Press, Cambridge, 1991, x+278 pp.

\bibitem{BCR}
D.J.\ Benson, J.\ Carlson, J.\ Rickard, \emph{Thick subcategories of the stable module category}, Fund.\ Math.\ 153 (1997), no.\ 1, 59--80.

%\bibitem{BE}
%D.J.\ Benson, P.\ Etingof, \emph{On cohomology in symmetric tensor categories in prime characteristic}, Homology, Homotopy and Applications 24 (2022), no.\ 2, 163--193.

%\bibitem{BEO}
%D.J.\ Benson, P.\ Etingof, V.\ Ostrik, \emph{New incompressible symmetric tensor categories in positive characteristic}, Duke Math.\ J.\ 172 (2023), no.\ 1, 105--200.

\bibitem{Be0}
P.A.\ Bergh, \emph{Representation dimension and finitely generated cohomology}, Adv.\ Math.\ 219 (2008), no.\ 1, 389--400.

\bibitem{Be1}
P.A.\ Bergh, \emph{Separable equivalences, finitely generated cohomology and finite tensor categories}, Math.\ Z.\ 304 (2023), no.\ 3, Paper No.\ 49, 21 pp.

\bibitem{Be2}
P.A.\ Bergh, \emph{Homology of complexes over finite tensor categories}, J.\ Noncommut.\ Geom.\ 19 (2025), no.\ 1, 249--268.

\bibitem{BEPW}
P.A.\ Bergh, K.\ Erdmann, J.Y.\ Plavnik, S.\ Witherspoon, \emph{On the representation type of a finite tensor category}, preprint.

\bibitem{BPW1}
P.A.\ Bergh, J.Y.\ Plavnik, S.\ Witherspoon, \emph{Support varieties for finite tensor categories: complexity, realization, and connectedness}, J.\ Pure Appl.\ Algebra 225 (2021), no.\ 9, Paper No.\ 106705, 21 pp.

\bibitem{BPW2}
P.A.\ Bergh, J.Y.\ Plavnik, S.\ Witherspoon, \emph{Support varieties for finite tensor categories: the tensor product property}, Ann.\ Repr.\ Th.\ 1 (2024), 4, p.\ 539--566. 

\bibitem{BPW3}
P.A.\ Bergh, J.Y.\ Plavnik, S.\ Witherspoon, \emph{Suport varieties without the tensor product property}, Bull.\ London Math.\ Soc.\ 56 (2024), 2150--2161.

%\bibitem{CEO}
%K.\ Coulembier, P.\ Etingof, V.\ Ostrik, \emph{Incompressible tensor categories}, Adv.\ Math.\ 457 (2024), Paper No.\ 109935, 65 pp.

\bibitem{D}
P.\ Deligne, \emph{Cat{\'e}gories tannakiennes}, (French) [Tannakian categories], in \emph{The Grothendieck Festschrift, Vol.\ II}, 111--195, Progr.\ Math., 87, Birkh{\"a}user Boston, Boston, MA, 1990.

\bibitem{Du}
A.S.\ Dugas, \emph{Periodic resolutions and self-injective algebras of finite type}, J.\ Pure Appl.\ Algebra 214 (2010), no.\ 6, 990--1000.

\bibitem{EHSST}
K.\ Erdmann, M.\ Holloway, N.\ Snashall, {\O}.\ Solberg, R.\ Taillefer, \emph{Support varieties for selfinjective algebras}, K-Theory 33 (2004), no.\ 1, 67--87.

\bibitem{ES}
K.\ Erdmann, A.\ Skowro{\'n}ski, \emph{Periodic algebras}, in \emph{Trends in representation theory of algebras and related topics}, 201--251, EMS Ser.\ Congr.\ Rep., European Mathematical Society (EMS), Z{\"u}rich, 2008.

\bibitem{EGNO} 
P.\ Etingof, S.\ Gelaki, D.\ Nikshych, V.\ Ostrik, \emph{Tensor categories}, Math.\ Surveys Monogr., 205, American Mathematical Society, Providence, RI, 2015, xvi+343 pp.
 
\bibitem{EO} 
P.\ Etingof, V.\ Ostrik, \emph{Finite tensor categories}, Mosc.\ Math.\ J.\ 4 (2004), no.\ 3, 627--654, 782--783. 

\bibitem{E}
L.\ Evens, \emph{The cohomology ring of a finite group}, Trans.\ Amer.\ Math.\ Soc.\ 101 (1961), 224--239.

\bibitem{G}
E.\ Golod, \emph{The cohomology ring of a finite $p$-group} (Russian), Dokl.\ Akad.\ Nauk SSSR 125 (1959), 703--706.

\bibitem{H}
D.\ Happel, \emph{Hochschild cohomology of finite-dimensional algebras}, Lecture Notes in Math.\ 1404, Springer-Verlag, Berlin, 1989, 108--126.

\bibitem{KPS}
J.\ K{\"u}lshammer, C.\ Psaroudakis, {\O}.\ Skarts{\ae}terhagen, \emph{Derived invariance of support varieties}, Proc.\ Amer.\ Math.\ Soc.\ 147 (2019), no.\ 1, 1--14.

\bibitem{NP}
C.\ Negron, J.Y.\ Plavnik, \emph{Cohomology of finite tensor categories: duality and Drinfeld centers}, Trans.\ Amer.\ Math.\ Soc. 375 (2022), 2069--2112.

\bibitem{NWW}
V.C.\ Nguyen, X.\ Wang, S.\ Witherspoon, \emph{New approaches to finite generation of cohomology rings}, J.\ Algebra 587 (2021), 390--428.

\bibitem{R}
J.\ Rickard, \emph{Derived equivalences as derived functors}, J.\ London Math.\ Soc.\ (2) 43 (1991), no.\ 1, 37--48.

\bibitem{SS}
N.\ Snashall, {\O}.\ Solberg, \emph{Support varieties and Hochschild cohomology rings}, Proc.\ London Math.\ Soc.\ (3) 88 (2004), no.\ 3, 705--732.

\bibitem{SA}
M.\ Su\'arez-\'Alvarez, \emph{The Hilton-Eckmann argument for the anti-commutativity of cup products}, Proc.\ Amer.\ Math.\ Soc.\ 132 (2004), no.\ 8, 2241--2246.

\bibitem{V}
B.B.\ Venkov, \emph{Cohomology algebras for some classifying spaces} (Russian), Dokl.\ Akad.\ Nauk SSSR 127 (1959), 943--944.

\bibitem{W1}
S.J.\ Witherspoon, \emph{Varieties for modules of finite dimensional Hopf algebras}, in \emph{Geometric and topological aspects of the representation theory of finite groups}, 481--495, Springer Proc.\ Math.\ Stat.\ 242, Springer, Cham, 2018.

\bibitem{W}
S.J.\ Witherspoon, \emph{Hochschild cohomology for algebras}, Grad.\ Stud.\ Math., 204, American Mathematical Society, Providence, RI, 2019, xi+250 pp.

\bibitem{X}
F.\ Xu, \emph{Hochschild and ordinary cohomology rings of small categories}, Adv.\ Math.\ 219 (2008), no.\ 6, 1872--1893.

\bibitem{Y}
N.\ Yoneda, \emph{Note on products in $\Ext$}, Proc.\ Amer.\ Math.\ Soc.\ 9 (1958), 873--875.

%\nocite{*}
%\printbibliography

\end{thebibliography}
\end{document}